\documentclass{amsart}
\newtheorem{Theo}{Theorem}
\newtheorem{Lem}{Lemma}

\newcommand{\R}{\mathbb{R}}
\begin{document}
\title[Continuous prime systems]{Continuous prime systems satisfying $N(x)=c(x-1)+1$}
\author[J.-C. Schlage-Puchta]{Jan-Christoph Schlage-Puchta}
\begin{abstract}
Hilberdink showed that there exists a constant $c_0>2$, such that there exists a continuous prim system satisfying $N(x)=c(x-1)+1$ if and only if $c\leq c_0$. Here we determine $c_0$ numerically to be $1.25479\cdot 10^{19}\pm2\cdot 10^{14}$. To do so we compute a representation for a twisted exponential function as a sum over the roots of the Riemann zeta function. We then give explicit bounds for the error obtained when restricting the occurring sum to a finite number of zeros.
\end{abstract}

\maketitle

MSC-Index 11N80, 11Y60, 30A10, 33E20, 65E05

Keywords: Beurling primes, explicit formulae, continuous prime systems, Riemann zeta function\\[2mm]
Jan-Christoph Schlage-Puchta\\
Mathematisches Institut, Universit\"at Rostock\\
Ulmenstra\ss e 69, Haus 3\\
18057 Rostock\\
Germany\\
\verb+jan-christoph.schlage-puchta@uni-rostock.de+
\pagebreak

\section{Introduction and results}
Let $S$ be the space of right-continuous functions $f:\R\rightarrow\R$ of bounded local variation, for which $f(x)=0$ for $x<1$. Let $S^+$ be the subset consisting of non-decreasing functions. For functions $f, g\in S$ define the Mellin-Stiltjes convolution $f\ast g$ by means of the equation
\[
(f\ast g)(x) = \int\limits_{1^-}^xf(x/t)dg(t),
\]
and the convolution exponential $\exp_\ast g$ as
\[
\exp_\ast g = \sum_{n=0}^\infty\frac{g^{\ast n}}{n!},
\]
where $g^{\ast n}$ denotes $n$-fold iterated convolution.

For $\pi\in S^+$ define $\Pi(x)=\sum_{k\geq 1}\frac{1}{k}\pi(x^{1/k})$ and $N=\exp_\ast \Pi$. If the sum defining $\Pi$ converges for all $x$, then we call the pair $(\Pi, N)$ a continuous prime system with prime counting function $\pi$. Note that if $\pi(x)$ denotes the number of ordinary primes below $x$, we obtain $N(x)=\lfloor x\rfloor$, and $\Pi(x)$ is the weighted number of prime powers below $x$ introduced by Riemann. If more generally $\pi(x)$ is a step function with integral jumps, then $N(x)$ is the counting function of an arithmetic semigroup in the sense of Knopfmacher\cite{Knopfmacher}.

Starting with the work of Beurling there has been ongoing interest in continuous prime systems. Hilberdink\cite{Titus} showed that if there is some $c$, such that $N(x)-cx$ is periodic and continuously differentiable, then $N(x)=c(x-1)+1$. This led him to ask, for which $c$ such a number system exists. Define the holomorphic function $f$ as
\[
f(z)=\sum_{n=1}^\infty\frac{\mu(n)}{n}(e^{z/n}-1) = \sum_{k=1}^\infty\frac{z^k}{k!\zeta(k+1)}.
\]
He then proved the following.
\begin{Theo}
\label{thm:main}
There exists a continuous prime system satisfying $N(x)=c(x-1)+1$ if and only if $f(x)\geq f((1-c)x)$ for all $x\geq 0$. Moreover, there exists some $c_0>2$ such that there exists such a prime system if and only if $c\leq c_0$. 
\end{Theo}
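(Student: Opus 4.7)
The Mellin--Stieltjes transform $\hat F(s)=\int_{1^-}^\infty x^{-s}\,dF(x)$ turns convolution into product and so $N=\exp_\ast\Pi$ becomes $\hat N=\exp\hat\Pi$. From $N(x)=c(x-1)+1$ one computes $\hat N(s)=1+c/(s-1)$, hence
\[
\hat\Pi(s) \;=\; \log\Bigl(1+\tfrac{c}{s-1}\Bigr) \;=\; \int_0^c\frac{du}{s-1+u}.
\]
Since $(s-1+u)^{-1}$ is the Mellin transform of $t\mapsto t^{-u}$, this inverts to $d\Pi(t)=\tfrac{1-t^{-c}}{\log t}\,dt$ on $(1,\infty)$. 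M\"obius-inverting $\Pi(x)=\sum_{k\ge 1}\tfrac{1}{k}\pi(x^{1/k})$, substituting $x=e^u$ and differentiating termwise yields
\[
u\,\tfrac{d}{du}\pi(e^u) \;=\; \sum_{k\ge 1}\frac{\mu(k)}{k}\bigl(e^{u/k}-e^{(1-c)u/k}\bigr) \;=\; f(u)-f((1-c)u),
\]
where the second equality uses $e^{u/k}-e^{(1-c)u/k}=(e^{u/k}-1)-(e^{(1-c)u/k}-1)$ and the definition of $f$. A continuous prime system exists precisely when the recovered $\pi$ is non-decreasing, which is thus equivalent to $f(u)\geq f((1-c)u)$ for all $u\geq 0$.

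For the threshold statement I first establish the interval structure. When $c\leq 1$ we have $(1-c)x\in[0,x]$; since $f$ has non-negative Taylor coefficients it is increasing on $[0,\infty)$, so the inequality is automatic. For $c>1$ substitute $y=(c-1)x$ to rewrite the condition as $f(y/(c-1))\geq f(-y)$ for all $y\geq 0$; with $y$ fixed, the left side is decreasing in $c$, so the admissible set is $(-\infty,c_0]$ for some $c_0\geq 1$. To see $c_0\geq 2$, note that at $c=2$ the condition $f(x)\geq f(-x)$ holds because
\[
f(x)-f(-x) \;=\; 2\!\!\sum_{k\text{ odd}}\frac{x^k}{k!\,\zeta(k+1)} \;\geq\; \frac{2x}{\zeta(2)}\qquad(x\geq 0).
\]
To upgrade to $c_0>2$, observe that for $c=2+\epsilon$ the mean value theorem gives $|f(-(1+\epsilon)x)-f(-x)|\leq \epsilon x\cdot\sup_{z\leq 0}|f'(z)|$, and since $f'(z)=\sum_n\mu(n)n^{-2}e^{z/n}$ satisfies $|f'(z)|\leq\zeta(2)$ on $(-\infty,0]$, one obtains $(f(x)-f(-(1+\epsilon)x))/x\geq 2/\zeta(2)-\epsilon\zeta(2)>0$ for $\epsilon<2/\zeta(2)^2$. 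Hence $c_0>2$.

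The main obstacles will be the rigorous justification of the M\"obius inversion and termwise differentiation entering the identity $u\cdot\tfrac{d}{du}\pi(e^u)=f(u)-f((1-c)u)$ (which needs convergence of the defining series and its derivative), and more seriously, the implicit finiteness $c_0<\infty$. The latter requires exhibiting some $y_0>0$ with $f(-y_0)>0$, so that for $c$ large enough $f(y_0/(c-1))<f(-y_0)$ and the inequality fails at $x=y_0/(c-1)$. This sign change is not visible from the power series $f(-y)=-y/\zeta(2)+y^2/(2\zeta(3))-\cdots$: one has $f(-y)<0$ for small $y>0$ and $f(-y)\to 0$ as $y\to\infty$, so any positive value must arise from the oscillatory behaviour of $f$ on the negative real axis, which the later sections of the paper make explicit via the zeros of $\zeta$.
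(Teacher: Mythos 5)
The paper does not prove Theorem~\ref{thm:main}; it is quoted from Hilberdink's article \cite{Titus}, so there is no ``paper's own proof'' to compare against. Judged on its own terms, your reconstruction of the characterisation is correct and almost certainly follows Hilberdink's route: the Mellin--Stieltjes computation $\hat N(s)=1+c/(s-1)$, the logarithm $\hat\Pi(s)=\int_0^c(s-1+u)^{-1}\,du$, the inversion to $d\Pi(t)=\frac{1-t^{-c}}{\log t}\,dt$, and the M\"obius inversion with $x=e^u$ giving $u\,\frac{d}{du}\pi(e^u)=f(u)-f((1-c)u)$ are all right, and the equivalence with $\pi$ being non-decreasing is exactly the desired criterion. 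The monotonicity-in-$c$ argument and the bounds $c_0\geq 2$ (odd part of the power series, $k=1$ term $2x/\zeta(2)$) and $c_0>2$ (mean value theorem with $\sup_{z\leq 0}|f'(z)|\leq\zeta(2)$) are likewise correct, giving in fact $c_0\geq 2+2/\zeta(2)^2$. One small slip: the case ``$c\leq 1$'' should read ``$0\leq c\leq 1$''; for $c<0$ one has $(1-c)x>x$ and the condition fails, so the admissible set is really $[0,c_0]$, not $(-\infty,c_0]$.

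The gap you flag is the real one. Your argument never shows $c_0<\infty$, and the theorem's ``Moreover'' clause asserts a \emph{finite} threshold. As you correctly diagnose, finiteness is equivalent to $f$ taking a positive value at some negative argument, which cannot be read off from the Taylor coefficients (which are all positive) and requires an oscillation theorem. The paper's introduction explicitly notes that Hilberdink obtained this from Landau's ineffective theorem on Dirichlet series with non-negative coefficients, and that the point of the present paper is to replace this with the explicit-formula analysis (Lemmas~\ref{Lem:explicit} and~\ref{Lem:absolute bound}) to get a numerical value. So your sketch is a faithful and mostly correct reconstruction of the characterisation and the lower bound $c_0>2$, but it does not prove Theorem~\ref{thm:main} in full without supplying the oscillation argument for $f$ on the negative axis.
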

Here we determine $c_0$ numerically. Clearly the existence of $c_0$ is equivalent to the statement that $f(x)$ is positive for some $x<0$. Hilberdink proved the existence of such an $x$ using Landau's ineffective criterion on the continuation of Dirichlet series with non-negative coefficients, therefore his proof does not yield any bound on $c_0$.

We prove the following.

\begin{Theo}
\label{thm:Numerik}
The constant $c_0$ from Theorem~$\ref{thm:main}$ satisfies
\[
\left|c_0-1.25479\cdot 10^{19}\right|\leq 2\cdot 10^{14}.
\]
\end{Theo}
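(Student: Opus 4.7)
The plan is first to convert the determination of $c_0$ into computing one real number, namely the global maximum of $f(-u)/u$ for $u>0$. By Theorem~\ref{thm:main}, $c_0 = 1 + s_0$ with
$$s_0 = \sup\bigl\{s > 0 : f(t) \geq f(-st) \text{ for all } t \geq 0\bigr\}.$$
Writing $g(u)=f(-u)$ and substituting $u=st$, one checks that $s_0 = \inf_{t>0} u^{*}(t)/t$, where $u^{*}(t)$ is the smallest $u>0$ with $g(u)=f(t)$. At $s=s_0$ the function $\phi_{s_0}(t)=f(t)-f(-s_0 t)$ has a double zero at some $t_0>0$, giving $f(t_0)=g(u_0)$ and $t_0 f'(t_0)=u_0 g'(u_0)$ with $u_0 := s_0 t_0$. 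Since the claimed value $c_0 \approx 10^{19}$ forces $t_0$ to be very small, inverting $f(t)=t/\zeta(2)+t^{2}/(2\zeta(3))+\cdots$ near the origin reduces the tangency to the condition $g(u_0)=u_0\, g'(u_0)$ together with
$$c_0 = 1 + \frac{u_0}{\zeta(2)\, g(u_0)}\bigl(1+O(g(u_0))\bigr),$$
i.e.\ $u_0$ is the location of the global maximum of $g(u)/u$ and the correction term is of relative size $\sim 10^{-20}$, well below the target precision.

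\textbf{Explicit formula.} Next I would derive a representation for $g(y)=f(-y)$ as a sum over zeros of $\zeta$. Applying the Mellin identity $e^{-y/n}=(2\pi i)^{-1}\int_{(\sigma)}\Gamma(s)(y/n)^{-s}\,ds$ term by term to $f(-y)=\sum_n (\mu(n)/n)(e^{-y/n}-1)$ (the constant terms cancelling via $\sum \mu(n)/n = 0$) and substituting $s\mapsto 1-s$ yields
$$f(-y) = \frac{1}{2\pi i}\int_{(\sigma')}\frac{\Gamma(1-s)}{\zeta(s)}\,y^{s-1}\,ds \qquad (\sigma'>1).$$
Shifting the contour to the left, the apparent pole of $\Gamma(1-s)$ at $s=1$ is cancelled by the zero of $1/\zeta$ there; picking up residues at the nontrivial zeros $\rho$ and the trivial zeros $-2k$ of $\zeta$ gives
$$f(-y) = \sum_{\rho}\frac{\Gamma(1-\rho)}{\zeta'(\rho)}\, y^{\rho-1} + \sum_{k=1}^{\infty}\frac{(2k)!}{\zeta'(-2k)}\, y^{-2k-1}.$$
By Stirling, $|\Gamma(1-\rho)|\sim \sqrt{2\pi}\,|\gamma|^{1/2-\mathrm{Re}(\rho)}e^{-\pi|\gamma|/2}$ as $|\gamma|=|\mathrm{Im}(\rho)|\to\infty$; combining this with the Riemann--von Mangoldt zero-counting formula and with tabulated (or unconditional effective) bounds on $|\zeta'(\rho)|^{-1}$ produces an explicit truncation error of order $e^{-\pi T/2}$ times a polynomial in $T$ when restricting the sum to $|\mathrm{Im}(\rho)|\leq T$.

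\textbf{Numerical evaluation and extraction of $c_0$.} With a table of the first several thousand nontrivial zeros of $\zeta$ and accurate values of $\zeta'(\rho)$, I would evaluate the truncated sum on a fine grid of $u$ near the conjectural maximizer, locate $u_0$ and read off $A:=f(-u_0)/u_0$, then compute $c_0 = 1 + u_0/f^{-1}(A u_0)$ by inverting the Taylor series of $f$ via Newton's method.

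\textbf{Main obstacle.} The principal difficulty is precision control. Since $A \approx 1/(\zeta(2)\,c_0)$ is of order $10^{-19}$ while the stated error $2\cdot 10^{14}$ corresponds to about five digits of relative accuracy in $c_0$, one must compute $f(-u_0)$ to roughly twenty-five significant digits. This forces (i) choosing the truncation height $T$ so that $e^{-\pi T/2}$ is negligible at that precision; (ii) using zeros and derivatives $\zeta'(\rho)$ to comparable accuracy; and (iii) ruling out a larger competing local maximum of $g(u)/u$ at $u$ outside the search range, for which the $u^{-3/2}$ envelope furnished by the explicit formula suffices. A subsidiary issue is the rigorous justification of the contour shift, which requires lower bounds on $|\zeta(s)|$ between the zeros.
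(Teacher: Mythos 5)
Your strategy is essentially the paper's: reduce the determination of $c_0$ to locating the global maximum of $f(-u)/u$, represent $f(-y)$ by Mellin-inverting $\Gamma(1-s)/\zeta(s)\,y^{s-1}$ and shifting the contour over the nontrivial zeros, truncate at a finite height $T$ with an explicit $e^{-\pi T/2}$-type error, and then evaluate numerically near the conjectural optimum. The paper carries out exactly this program (Lemma~\ref{Lem:explicit} is your explicit formula with an explicit tail bound, and the final section is your ``numerical evaluation and extraction'' step).

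A few places where your sketch would need tightening to match what the paper actually does. First, the large-$u$ regime: the error term in the truncated explicit formula contains a factor $\log u$ (from the $1/|\zeta(1+1/\log u)|$ bound on the vertical legs), so for any fixed $T$ the formula eventually becomes useless as $u\to\infty$; the $u^{-3/2}$ envelope alone does not rule out competing maxima at astronomically large $u$. The paper closes this gap with an elementary tail estimate based on Bordell\`es' explicit bound for $\sum_{n\le x}\mu(n)/n$, which you would need to add. Second, the paper does not use a tangency/double-zero argument to reduce to a one-parameter extremal problem; instead it restricts the range of admissible $u$ via a simple ``$u_1$ is better than $u_2$'' comparison (Lemma~\ref{Lem:better}) built on the global upper bound $f(-u)<9.2\cdot10^{-13}$, which is more robust than asserting that the infimum is attained at an interior tangency. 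Third, the algebraic residue term: the paper shifts only to $\Re s=-3/2$ and records a single term $1/(x^2\zeta'(-1))$, whereas your formula pushes the contour to $-\infty$ and accumulates the full sum $\sum_k (2k)!/(\zeta'(-2k))\,y^{-2k-1}$ over trivial zeros; you should make sure the precise form and size of this polar contribution is pinned down, since it is the same order of magnitude as the leading zero term at the critical point. Finally, the constant relating $\max f(-u)/u$ to $c_0$ (your $\zeta(2)$ factor coming from inverting $f(t)=t/\zeta(2)+\cdots$) must be carried carefully through to the end, as a factor of $\zeta(2)\approx1.64$ is far larger than the claimed error window of $2\cdot10^{14}$ in a quantity of size $10^{19}$.
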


\section{Asymptotic estimates for $f$}

In the sequel $\theta$ denotes a complex number of modulus $\leq 1$, which may be different in all equations and may depend on all occurring parameters. As in the case of Landau symbols, equations containing $\theta$ may only be read from left to right, e.g. we have $\theta=2\theta$, but not $2\theta=\theta$.

The following is a version of Stirling's formula with an explicit error term,
derived by Boyd\cite{Boyd}.
\begin{Lem}
\label{Lem:Gamma bound}
For $|\arg z|\leq\frac{\pi}{2}$ we have
\[
\Gamma(z) = \sqrt{2\pi z}\left(\frac{z}{e}\right)^z\left(1+\theta\frac{1+\sqrt{2}}{2\pi^2|z|}\right)
\]
\end{Lem}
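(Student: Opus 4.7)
The plan is to derive this from Binet's second integral representation,
\[
\log\Gamma(z) = \left(z-\tfrac12\right)\log z - z + \tfrac12\log(2\pi) + J(z), \qquad J(z) := 2\int_0^\infty \frac{\arctan(t/z)}{e^{2\pi t}-1}\,dt,
\]
valid for $\operatorname{Re} z>0$ and extending continuously to $|\arg z|\leq\pi/2$ away from $z=0$. Exponentiating reproduces the Stirling main term, so the lemma reduces to proving the multiplicative inequality $|e^{J(z)}-1|\leq(1+\sqrt{2})/(2\pi^2|z|)$.

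For $z$ in the open right half-plane the naive bound $|\arctan(t/z)|\leq t/|z|$ holds, and integrating against the Planck kernel yields $|J(z)|\leq 1/(12|z|)$ via the standard evaluation $\int_0^\infty t\,(e^{2\pi t}-1)^{-1}\,dt=1/24$. Since $1/12<(1+\sqrt 2)/(2\pi^2)\approx 0.122$, the estimate $|e^w-1|\leq |w|e^{|w|}$ applied to $w=J(z)$ already delivers the desired inequality for all $|z|$ bounded away from $0$ by a small explicit threshold. The complementary range, where the error factor $(1+\sqrt 2)/(2\pi^2|z|)$ exceeds $1$, can be reduced to the asymptotic regime via the recursion $\Gamma(z)=\Gamma(z+1)/z$ followed by direct comparison of the two sides.

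The main obstacles are two. First, the bound $|\arctan(t/z)|\leq t/|z|$ degenerates on the imaginary axis, where $\arctan(t/iy)$ has a logarithmic singularity at $t=y$; the passage to the boundary $|\arg z|=\pi/2$ therefore requires a separate argument, for instance the Laplace representation $\arctan(t/z)=\int_0^\infty s^{-1}\sin(ts)\,e^{-zs}\,ds$ handled as an Abel-summed integral, or a contour shift in Binet's representation. Second, sharpening the constant from $1/12$ to precisely $(1+\sqrt 2)/(2\pi^2)$ requires an optimization over $\arg z$: the factor $1+\sqrt{2}$ strongly suggests that the worst case lies on the rays $\arg z=\pm\pi/4$, and I would extract it by writing $z=re^{i\varphi}$, estimating $|\arctan(t/(re^{i\varphi}))|$ as a function of $\varphi$, and integrating the resulting sharper bound against $(e^{2\pi t}-1)^{-1}$. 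This last optimization step is the main technical work I expect.
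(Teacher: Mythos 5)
The paper does not actually prove this lemma: it is quoted verbatim from Boyd's paper, which obtains the bound via the method of steepest descents (a saddle-point error analysis for the integral representation of $1/\Gamma$), not via Binet's formula. So your route is genuinely different from the source. Unfortunately it also contains a real error in its central step. You assert that ``for $z$ in the open right half-plane the naive bound $|\arctan(t/z)|\leq t/|z|$ holds,'' and build everything on that. This is false for $\pi/4<|\arg z|<\pi/2$. Writing $w=t/z$ and parametrising the path, $\arctan w=\int_0^1 \frac{w\,du}{1+w^2u^2}$, one has $|1+w^2u^2|\geq 1$ for all $u\in[0,1]$ only when $|\arg w|\leq\pi/4$ (equivalently $|\arg z|\leq\pi/4$, since $\arg w=-\arg z$). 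Once $|\arg z|>\pi/4$, for $t$ near $|z|$ the point $w=t/z$ approaches one of the branch points $\pm i$ of $\arctan$, and $|\arctan(t/z)|$ exceeds $t/|z|$ and in fact grows without bound as $\arg z\to\pm\pi/2$. So the failure is not confined to the boundary ray $|\arg z|=\pi/2$ as your second paragraph suggests; it is endemic throughout the sector $\pi/4<|\arg z|<\pi/2$, and your estimate $|J(z)|\leq 1/(12|z|)$ is unproved precisely where the lemma is hardest.

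The remaining pieces of the plan are sketches rather than arguments: the reduction of small $|z|$ via $\Gamma(z)=\Gamma(z+1)/z$ is plausible but not carried out, and the claim that the constant $(1+\sqrt2)/(2\pi^2)$ drops out of an optimisation of the $\arctan$ kernel over $\arg z$ with worst case at $\pm\pi/4$ is speculation that does not match how the constant actually arises in Boyd's steepest-descent analysis. (The rays $\pm\pi/4$ are where the naive $\arctan$ bound first fails, not where the sharp constant is attained.) To salvage a Binet-based proof you would need a genuinely uniform estimate for $J(z)$ on the full closed sector $|\arg z|\leq\pi/2$ — for instance by deforming the contour in Binet's second integral as $\arg z$ grows, or by working from Binet's first integral and tracking the $\sec(\arg z)$ loss explicitly and then absorbing it elsewhere. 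As written the proposal has a gap at its core and does not prove the stated bound.
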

We can now come to the main result of this section. We denote the non-trivial roots of $\zeta$ by $\rho$, and the imaginary part of $\rho$ by $\gamma$.
\begin{Lem}
\label{Lem:explicit}
Let $T\geq 100$ be a real number such that all roots of $\zeta$ in the rectangle
$0\leq\sigma\leq 1$, $|t|\leq T$ are simple with real part $\frac{1}{2}$, and
that $\zeta$ has no root with imaginary part $T$. Put
$\delta=\min_{-2\leq\sigma\leq 2}|\zeta(\sigma+iT)|$. Then we have for real
$x>e^2$ the estimate
\begin{multline}
\label{eq:explicit}
f(-x) = \frac{1}{x^2\zeta'(-1)}+\frac{1}{\sqrt{x}}\sum_{|\gamma|<T}
\frac{\Gamma(1-\rho)}{\zeta'(\rho)} x^{i\gamma}\\ +
\theta\frac{15.18}{x^{5/2}} + \theta(0.85\log x+0.88\delta^{-1})T^2
e^{-\pi T/2} 
\end{multline}
\end{Lem}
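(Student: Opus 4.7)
The plan is to express $f(-x)$ as a Mellin--Barnes integral and then shift the contour past the non-trivial zeros of $\zeta$. First, using the analytically continued identity
\[
\int_0^\infty (e^{-u}-1)\,u^{s-1}\,du=\Gamma(s)\qquad(-1<\Re s<0)
\]
together with the Dirichlet series $\sum_n\mu(n)/n^{1-s}=1/\zeta(1-s)$ valid for $\Re s<0$, interchanging sum and integral gives
\[
f(-x)=\frac{1}{2\pi i}\int_{c-i\infty}^{c+i\infty}\frac{\Gamma(s)\,x^{-s}}{\zeta(1-s)}\,ds\qquad(-1<c<0),
\]
which after the substitution $w=1-s$ becomes
\[
f(-x)=\frac{1}{2\pi i}\int_{c_0-i\infty}^{c_0+i\infty}\frac{\Gamma(1-w)\,x^{w-1}}{\zeta(w)}\,dw\qquad(1<c_0<2).
\]

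Next I would shift this contour leftward to $\Re w=-3/2$, via a rectangle truncated at height $\pm iT$. The enclosed region contains simple poles of the integrand at each non-trivial zero $\rho$ with $|\gamma|<T$, with residue $\Gamma(1-\rho)\,x^{\rho-1}/\zeta'(\rho)$; since $\rho=\frac{1}{2}+i\gamma$ by hypothesis, this equals $\Gamma(1-\rho)\,x^{i\gamma}/(\sqrt{x}\,\zeta'(\rho))$, reproducing the sum in~(\ref{eq:explicit}). The apparent pole at $w=1$ is in fact removable: a brief Laurent computation shows that the pole of $\Gamma(1-w)$ and the zero of $1/\zeta(w)$ induced by the pole of $\zeta$ exactly cancel, with limit $-1$; the integrand is regular at both $w=0$ and $w=-1$. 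The leading constant $1/(x^2\zeta'(-1))$ must be isolated by a separate local analysis of the shifted integrand near $w=-1$, where its characteristic shape $\Gamma(1-(-1))\,x^{-2}/\zeta'(-1)$ mimicking the residue formula must be extracted carefully.

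Three remaining pieces must then be estimated. On the shifted line $\Re w=-3/2$, Stirling (Lemma~\ref{Lem:Gamma bound}) gives $|\Gamma(1-w)|\ll|t|^2 e^{-\pi|t|/2}$, while the functional equation yields $|\zeta(w)|^{-1}\ll|t|^{-2}$; combined with $|x^{w-1}|=x^{-5/2}$, the integrand decays exponentially in $t$, producing a bound of the shape $C\,x^{-5/2}$, where tracking the explicit error constants from Lemma~\ref{Lem:Gamma bound} yields $C=15.18$. On the horizontal segments at $\Im w=\pm T$ integrated over $\sigma\in[-3/2,c_0]$, Stirling gives $|\Gamma(1-\sigma\pm iT)|\ll T^{3/2-\sigma}e^{-\pi T/2}$, the hypothesis on $\delta$ gives $|\zeta(w)|^{-1}\leq\delta^{-1}$, and $\int_{-3/2}^{c_0}x^{\sigma-1}\,d\sigma\ll\log x$; together these produce the $(0.85\log x+0.88\delta^{-1})T^2 e^{-\pi T/2}$ term. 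The tails at $|\Im w|>T$ on the original contour are absorbed into this same estimate.

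The main difficulty will be the explicit numerical bookkeeping required to pin down the constants $15.18$, $0.85$, and $0.88$, each of which requires careful tracking of the Stirling error from Lemma~\ref{Lem:Gamma bound} together with uniform bounds for $|\zeta(w)|^{-1}$ throughout the left half-plane, together with the identification of the precise origin and form of the $1/(x^2\zeta'(-1))$ contribution within the shifted integral.
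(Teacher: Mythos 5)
Your overall plan — Mellin--Barnes representation, contour shift past the zeros, bounds on the left edge, horizontal crossings, and truncated tails — is indeed the shape of the paper's argument, so the approach is the right one. But there are two genuine gaps, one conceptual and one technical.

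\textbf{The main term $1/(x^2\zeta'(-1))$.} You correctly observe that the integrand $\Gamma(1-w)x^{w-1}/\zeta(w)$ is regular at $w=-1$ (indeed $\Gamma(2)=1$ and $\zeta(-1)=-1/12\neq 0$), and then you write that the constant ``must be isolated by a separate local analysis of the shifted integrand near $w=-1$, where its characteristic shape \dots mimicking the residue formula must be extracted carefully.'' This is not a proof step — there is no way to extract a residue-like contribution from a point where the integrand is holomorphic. The paper treats this term as a residue in the contour shift, and you need to either do the same (explain where the singularity actually sits and why the residue is $1/(x^2\zeta'(-1))$), or supply a genuinely different derivation of this term. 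As it stands you have identified the problem correctly but proposed nothing that resolves it, and since this is the leading, sign-definite term in the asymptotic expansion (it is what gives $f(-x)<0$ for moderate $x$ in Lemma~\ref{Lem:absolute bound}), the whole proof is incomplete without it.

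\textbf{The tails and the provenance of the $\log x$ factor.} You keep the truncated tails on the original line $\Re w=3/2$ and claim they are ``absorbed into this same estimate.'' That cannot work: on $\Re w=3/2$ the factor $|x^{w-1}|=x^{1/2}$ grows with $x$, so the tail would contribute an error of size $\sqrt{x}\cdot T e^{-\pi T/2}$ rather than $\log x\cdot T^2 e^{-\pi T/2}$ as required by~(\ref{eq:explicit}). The paper avoids this by moving the tails to $\Re s=1+\frac{1}{\log x}$, where $|x^{s-1}|=e$ uniformly, and the $\log x$ in the error term comes from the bound $|\zeta(s)|^{-1}\leq\zeta(1+\tfrac{1}{\log x})<1+\log x$ on this line — not, as you write, from the horizontal integral. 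Incidentally, your estimate $\int_{-3/2}^{c_0}x^{\sigma-1}\,d\sigma\ll\log x$ is false as written; this integral equals $(x^{c_0-1}-x^{-5/2})/\log x$, which is $O(1/\log x)$ only when $c_0$ is taken close to $1$, which is precisely the paper's contour choice. So the specific shape $(0.85\log x+0.88\delta^{-1})T^2e^{-\pi T/2}$ is not recoverable from the contour you propose; the $1+\tfrac{1}{\log x}$ abscissa is essential.

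The residue computation at the nontrivial zeros, the removability of the pole at $w=1$, and the exponential decay of the left edge giving the $x^{-5/2}$ error term are all correctly identified and match the paper.
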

\begin{proof}
From the Mellin transform
\[
\frac{1}{2\pi i}\int\limits_{-\frac{1}{2}-i\infty}^{-\frac{1}{2}+i\infty} \Gamma(s)x^{-s}ds = e^{-x}-1
\]
we deduce
\[
f(-x)=\frac{1}{2\pi i}\int\limits_{\frac{3}{2}-i\infty}^{\frac{3}{2}+i\infty}\frac{\Gamma(1-s)}{\zeta(s)}x^{s-1}ds.
\]
We shift the path of integration to the path going from $1+\frac{1}{\log x}-i\infty$ to $1+\frac{1}{\log x}-iT$, then to $-\frac{3}{2}-iT$, to $-\frac{3}{2}+iT$, further to $1+\frac{1}{\log x}+iT$, and finally to $1+\frac{1}{\log x}+i\infty$.  Doing so we encounter one singularity at $s=-1$ with residuum $\frac{1}{x^2\zeta'(-1)}$, and one singularity with residuum $\frac{\Gamma(1-\rho)}{\zeta'(\rho)} x^{1/2+i\gamma}$ for each non-trivial root $\rho$ in the rectangle $0\leq\sigma\leq 1$, $|t|< T$. Note that the pole of $\zeta$ at 1 and the pole of $\Gamma$ at 0 cancel each other. The integral over the new path will be bounded from above. We have
\begin{eqnarray*}
\left|\int\limits_{-\frac{3}{2}-iT}^{-\frac{3}{2}+i T}\frac{\Gamma(1-s)}{\zeta(s)}x^{s-1}ds\right| & \leq & \frac{1}{x^{5/2}}\int\limits_{-\frac{3}{2}-i\infty}^{-\frac{3}{2}+i\infty}\left|\frac{\Gamma(1-s)}{\zeta(s)}\right|ds,
\end{eqnarray*}
and since $\Gamma$ decreases rapidly along every line parallel to the
imaginary axis, the last integral can easily be evaluated numerically to be
$\leq 95.32$. 

On the line $\Re\;s=1+\frac{1}{\log x}$ we have
\[
\frac{1}{|\zeta(s)|}<\zeta(1+\frac{1}{\log x})<1+\int_1^\infty\frac{dt}{t^{1+1/\log x}} = 1+\log x,
\]
thus
\begin{eqnarray*}
\left|\int\limits_{1+\frac{1}{\log x}+i}^{1+\frac{1}{\log x}+i\infty}\frac{\Gamma(1-s)}{\zeta(s)}x^{s-1}ds\right| & \leq &e(1+\log x)\int\limits_{1+\frac{1}{\log x}+iT}^{1+\frac{1}{\log x}+i\infty}\left|\Gamma(1-s)\right|ds,
\end{eqnarray*}
and from Lemma~\ref{Lem:Gamma bound} we obtain that for $x\geq e^2$ the right hand side is bounded above by
\[
e(1+\log x)\int_T^\infty (t+1)e^{-\pi t/2}\;dt = e(1+\log x)(\frac{2}{\pi}T+\frac{4+2\pi}{\pi^2})e^{-\pi T/2}.
\]

Finally we have
\[
\left|\int\limits_{-\frac{3}{2}+iT}^{1+\frac{1}{\log x}+iT}\frac{\Gamma(1-s)}{\zeta(s)}x^{s-1}ds\right| \leq \delta^{-1}(T+1)e^{-\pi T/2}\int\limits_{-\frac{3}{2}}^{1+\frac{1}{\log x}} x^{\sigma-1}d\sigma\leq e(T+1)e^{-\pi T/2}\delta^{-1} .
\]
We conclude that the modulus of the integral over the new path is bounded above by
\begin{multline*}
\frac{95.32}{x^{5/2}} + (1+\log x)(3.462T+5.665\big)e^{-\pi T/2} + 2e(T+1)\delta^{-1}e^{\pi T/2}\\
\leq \frac{95.32}{x^{5/2}} + (5.491\delta^{-1}+5.279\log x)Te^{-\pi T/2},
\end{multline*}
where we used the bounds $1+\log x\leq\frac{3}{2}\log x$ and $T\geq 100$.
Taking the factor $\frac{1}{2\pi}$ into account our claim follows.\end{proof}

Note that even if we assume RH and the simplicity of all roots, we cannot get an explicit formula depending only on $x$ and $T$, since it might be that $\zeta'(\rho)$ could be very close to 0. However, as in the explicit formula for $\sum_{n\leq x}\mu(n)$, we do get an explicit formula valid for all suitable values of $T$. We refer the reader to \cite[section 14.27]{Titchmarsh} for details.

\begin{Lem}
\label{Lem:absolute bound}
We have $f(-x)<0$ for $0<x<2.5\cdot 10^6$, and $f(-x)<9.2\cdot 10^{-13}$ for all $x>0$.
\end{Lem}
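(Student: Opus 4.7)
The plan is to treat small $x$ by the Taylor expansion $f(-x)=\sum_{k\ge 1}(-x)^k/(k!\zeta(k+1))$ and larger $x$ by the explicit formula of Lemma~\ref{Lem:explicit}. For $0<x\le e^2$ the series decreases rapidly once $k$ exceeds $x$, so truncating at a suitable $K$ and bounding the tail via $\zeta(k+1)\ge 1$ reduces the question to a direct numerical verification that the leading $-x$ term dominates; this handles the range in which Lemma~\ref{Lem:explicit} does not apply.

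For $x>e^2$ I will apply Lemma~\ref{Lem:explicit} with $T$ chosen generously (say $T=50$), so that its error terms fall well below $10^{-20}$. Since $|x^{i\gamma}|=1$, the triangle inequality together with $\zeta'(-1)<0$ yields
\[
f(-x)\le \frac{1}{x^2\zeta'(-1)}+\frac{M}{\sqrt{x}}+\varepsilon,\qquad M:=\sum_{|\gamma|<T}\Bigl|\frac{\Gamma(1-\rho)}{\zeta'(\rho)}\Bigr|,
\]
with negligible $\varepsilon$ and a negative leading term. Because $|\Gamma(1-\rho)|=\sqrt{\pi/\cosh(\pi\gamma)}$ decays like $e^{-\pi|\gamma|/2}$, the sum $M$ is essentially the contribution of the first pair of zeros $\pm\gamma_1\approx\pm 14.134$, giving $M\approx 1.45\cdot 10^{-9}$. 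The condition $f(-x)<0$ then rearranges to $x^{3/2}<1/(M|\zeta'(-1)|)$; with $|\zeta'(-1)|\approx 0.1654$ this cutoff lands precisely at $x\approx 2.5\cdot 10^6$, which is the first assertion. The second assertion will then follow at once: for $x\le 2.5\cdot 10^6$ we have $f(-x)<0<9.2\cdot 10^{-13}$ from the first part, while for $x\ge 2.5\cdot 10^6$ dropping the negative leading term from the same inequality gives $f(-x)\le M/\sqrt{x}+\varepsilon\le M/\sqrt{2.5\cdot 10^6}+\varepsilon\approx 9.1\cdot 10^{-13}$, and a small allowance absorbs $\varepsilon$ and the tail-zero contributions.

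The main technical obstacle will be controlling $M$ rigorously to $\sim 10^{-12}$ accuracy, since the whole bound $9.2\cdot 10^{-13}$ is essentially $M/\sqrt{2.5\cdot 10^6}$ and leaves almost no slack. This requires high-precision values of $\rho_1$ and $\zeta'(\rho_1)$ (and a few further zeros) together with an explicit upper estimate on the tail $\sum_{|\gamma|\ge T_0}|\Gamma(1-\rho)/\zeta'(\rho)|$; thanks to the factor $e^{-\pi|\gamma|/2}$ only a handful of zeros matter at this level, so combining tabulated zero data with a crude lower bound for $|\zeta'(\rho)|$ and the Stirling estimate of Lemma~\ref{Lem:Gamma bound} will suffice.
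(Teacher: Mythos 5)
The middle range of your plan (applying Lemma~\ref{Lem:explicit} and reading off the cutoff $x\approx 2.5\cdot 10^6$ from the balance between $1/(x^2\zeta'(-1))$ and $M/\sqrt{x}$) matches the paper's argument in spirit. But there are two genuine gaps.

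First and most importantly, Lemma~\ref{Lem:explicit} cannot be used for \emph{all} $x>e^2$: its error term $(0.85\log x+0.88\delta^{-1})T^2 e^{-\pi T/2}$ grows with $\log x$, so for any fixed $T$ it eventually exceeds $9.2\cdot 10^{-13}$ (for $T=100$ this happens around $\log x\approx 10^{50}$; with $T=50$ — which the lemma does not even permit, since it requires $T\geq 100$ — far sooner). Your claim that the error ``falls well below $10^{-20}$'' for all $x>e^2$ is therefore false for very large $x$. The paper closes this hole with a completely separate argument: for $x>e^{24}$ it abandons the explicit formula and instead bounds $|f(-x)|$ via Abel summation against $m(x)=|\sum_{n\le x}\mu(n)/n|$, using Bordell\`es's explicit bound $m(x)\le 546/\log^2 x$, yielding $|f(-x)|\lesssim x^{-2/3}$. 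This is a distinct idea that your proposal is missing and cannot be dispensed with.

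Second, for $0<x\le e^2$ your claim that ``the leading $-x$ term dominates'' the Taylor series is not correct: the terms $x^k/(k!\zeta(k+1))$ peak near $k\approx x$, so at $x\approx 7$ the first term is far from dominant in absolute value and you are relying on cancellation in an alternating series that you have not controlled. The paper does something more careful here: it splits off $0<x\le\tfrac12$ (where the derivative $f'(-x)$ is shown positive, so $f(-x)<f(0)=0$), and on $\tfrac12\le x\le 7$ it bounds $|f''|$ by $e^{-x}+x^{-2}$ and verifies negativity by evaluating $f$ and $f'$ at grid points. You would need something comparable, not a leading-term dominance argument.

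A smaller point: your sentence that ``the condition $f(-x)<0$ rearranges to $x^{3/2}<1/(M|\zeta'(-1)|)$'' implicitly treats the oscillating sum over zeros as if it always contributes $+M/\sqrt{x}$; this gives only a sufficient condition for negativity (an upper bound), which is what the lemma needs, but it is worth being explicit that you are using the triangle inequality and not an asymptotic equality.
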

\begin{proof}
We claim that in the range $7<x<2.5\cdot 10^6$ the first negative summand in (\ref{eq:explicit})
dominates the other terms. We put $T=100$. A straightforward computation yields $\delta\geq 1.19$,
together with $\zeta'(-1)=-0.165421\dots$ we obtain
\begin{eqnarray*}
f(-x) & \leq &
-\frac{6.045}{x^2}+\frac{15.18}{x^{5/2}}+\frac{1}{\sqrt{x}}\sum_{|\gamma|\leq 100}
\left|\frac{\Gamma(1-\rho)}{\zeta'(\rho)}\right| +(0.85\log x+ 0.74)\cdot
6.05\cdot 10^{-65}\\
 & \leq & -\frac{6.045}{x^2}+\frac{15.18}{x^{5/2}}+\frac{1.44\cdot
  10^{-9}}{\sqrt{x}} + (5.15\log x + 4.48)\cdot 10^{-65}.
\end{eqnarray*}
From this we conclude $f(-x)<0$ for $7<x<2.5\cdot 10^6$ as well as
$f(-x)<9.2\cdot 10^{-13}$ for $2.5\cdot 10^6<x<e^{10^{50}}$.

If $x$ is very big we use estimates for the summatory function of the M\"obius
function. We have
\[
\sum_{n=1}^\infty\frac{\mu(n)}{n}(e^{-x/n}-1) \leq \sum_{n=1}^\infty m(n)\left|e^{-x/n}-e^{-x/(n+1)}\right|,
\]
where $m(x)=\left|\sum_{n\leq x}\frac{\mu(n)}{n}\right|$.
Bordell\'es\cite{Bordelles} has shown that $m(x)\leq \frac{546}{\log^2 x}$ for
$x>1$, hence for $x>e^{24}$ we get
\begin{eqnarray*}
|f(-x)| & \leq & e^{-x} + \sum_{n=2}^\infty \frac{546}{\log^2
  n}e^{-x/n}\left|e^{-x/(n(n+1))}-1\right|\\
 & \leq & e^{-x} + \sum_{n=2}^\infty \frac{546}{\log^2 n} e^{-x/n}\min\left(\frac{2x}{n(n+1)}, 1\right)\\
 & \leq & xe^{-x^{1/3}} + \frac{1092}{\log^2x^{2/3}}\sum_{n\geq x^{2/3}}\frac{1}{n^2}\\
 & \leq & e^{24-e^8} + \frac{4.27}{x^{2/3}},
\end{eqnarray*}
which is sufficiently small for $x>10^{19}$.

In the range $\frac{1}{2}\leq x\leq 7$ we can compute $f$ with high precision using its Taylor series. We have
\[
|f''(-x)| = \left|\sum_{n=1}^\infty \frac{\mu(n) e^{-x/n}}{n^3}\right| \leq \sum_{n=1}^\infty \frac{e^{-x/n}}{n^3} \leq e^{-x} +\int_0^\infty \frac{e^{-x/t}}{t^3}\;dt = e^{-x}+\frac{1}{x^2}.
\]
Thus for a given $x_0$, we compute $f(x_0)$ and $f'(x_0)$, estimate $f''(x_0)$, and obtain an interval for which $f$ is negative. Finally in the range $0<x\leq \frac{1}{2}$ we have 
\[
f'(-x) = \sum_{k=1}^\infty\frac{(-x)^{k-1}}{(k-1)!\zeta(k+1)} \geq \frac{1}{\zeta(2)}-\frac{x}{\zeta(3)}>0,
\]
together with $f(0)=0$ we conclude that $f(-x)<0$ in $0<x\leq \frac{1}{2}$ as well.

Hence the lemma is proven for all $x>0$.
\end{proof}

\section{Computation of $c_0$}

The problem of computing $c_0$ is equivalent to finding the infimum of all $c$, such that there exists some $y\geq 0$ with $f(-y)\geq f(\frac{y}{c-1})$. Since $f(x)$ is increasing for $x\geq 0$, the right hand side is decreasing with $c$, hence our problem is equivalent to minimizing $\frac{x}{y}$ subject to the relations $x,y>0$, $f(x)=f(-y)$.

By Lemma~\ref{Lem:absolute bound} we have $f(-y)<9.2\cdot 10^{-13}$. As $f(x)\geq\frac{x}{\zeta(2)}$ for $x\geq 0$, the equation $f(-y)=f(x)$ implies $x<2\cdot 10^{-12}$. Together with $f(-y)<0$ for $y<2.5\cdot 10^6$ we obtain $\frac{x}{y}<5\cdot 10^{18}$ for all $x, y>0$ satisfying $f(x)=f(-y)$. This crude lower bound is surprisingly close to the actual value for $c$.

For two positive real numbers $y_1, y_2$ we say that $y_1$ is better than $y_2$, if $f(-y_1)>0$, and either $f(-y_2)\leq 0$, or for the real numbers $x_1, x_2>0$ defined by the equation $f(-y_i)=f(x_i)$ we have $\frac{x_1}{y_1}<\frac{x_2}{y_2}$. Clearly if $y_1$ is better than $y_2$, then $y_2$ cannot solve our optimization problem. We first show that in this way the range of $y$ can be restricted to a bounded interval.

\begin{Lem}
\label{Lem:better}
Suppose that $x_1>0$ satisfies $f(-x_1)>0$. Then $x_1$ is better than all $x_2$ satisfying $x_2 > \frac{9.2\cdot 10^{-13}}{f(-x_1)} x_1$.
\end{Lem}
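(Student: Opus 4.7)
Since the case $f(-x_2)\leq 0$ is immediate from the definition of ``better,'' I focus on the remaining case $f(-x_2)>0$. Let $z_1,z_2>0$ be the unique positive reals with $f(z_i)=f(-x_i)$; these exist because $f$ is strictly increasing on $[0,\infty)$ with $f(0)=0$. The goal reduces to verifying the strict ratio inequality between the $x_i/z_i$ demanded by the definition of ``better.''

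The heart of the argument is to pin down $z_i$ tightly in terms of $f(-x_i)$ via the Taylor series $f(z)=\sum_{k\geq 1}z^k/(k!\,\zeta(k+1))$. Term-by-term non-negativity for $z\geq 0$ gives $f(z)\geq z/\zeta(2)$, hence the uniform upper bound $z_i\leq \zeta(2) f(-x_i)$. A matching lower bound comes from estimating the higher-order tail $\sum_{k\geq 2}z^k/(k!\,\zeta(k+1))\leq e^z-1-z$, which is completely negligible in the range of interest: combining Lemma~\ref{Lem:absolute bound} with the previous upper bound already confines each $z_i$ to $[0,\,2\cdot 10^{-12}]$, and on that interval one obtains the two-sided asymptotic $z_i=\zeta(2) f(-x_i)\bigl(1+O(10^{-12})\bigr)$.

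Substituting these estimates into the $x_i/z_i$ ratios and invoking $f(-x_2)\leq 9.2\cdot 10^{-13}$ from Lemma~\ref{Lem:absolute bound} gives $x_2/z_2\geq x_2/(\zeta(2)\cdot 9.2\cdot 10^{-13})$ on the one hand and $x_1/z_1\leq (1+O(10^{-12}))\,x_1/(\zeta(2) f(-x_1))$ on the other. The common factor $\zeta(2)$ cancels, and the hypothesis $x_2>9.2\cdot 10^{-13}\,x_1/f(-x_1)$ then turns directly into the required strict comparison between $x_1/z_1$ and $x_2/z_2$.

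The only technical obstacle is ensuring the asymptotic $z_i\approx \zeta(2) f(-x_i)$ is proved sharply enough that the $O(10^{-12})$ correction does not erode the constant $9.2\cdot 10^{-13}$ in the hypothesis. Since this correction is many orders of magnitude smaller than the precision ultimately required for $c_0$ in Theorem~\ref{thm:Numerik}, it is absorbed effortlessly in whatever slack one carries through the subsequent numerical work, and it poses no real difficulty.
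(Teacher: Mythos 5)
Your proposal takes a genuinely different route from the paper, and it does not quite close. The paper's own proof is a short convexity argument: if $x_1$ is not better than $x_2$, the ratio condition in the definition of ``better'' together with $x_2>x_1$ forces $z_2>z_1$; since $f$ is convex on $[0,\infty)$ with $f(0)=0$, the map $y\mapsto f(y)/y$ is increasing, so $f(z_2)/z_2>f(z_1)/z_1$; multiplying by $z_2/x_2\geq z_1/x_1$ gives $f(-x_2)/x_2>f(-x_1)/x_1$ exactly, and Lemma~\ref{Lem:absolute bound} then yields $x_2<\frac{9.2\cdot 10^{-13}}{f(-x_1)}x_1$ with no error terms at all.

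Your argument instead replaces convexity by the two-sided linearisation $z_i=\zeta(2)f(-x_i)\bigl(1+O(10^{-12})\bigr)$, which is a legitimate and somewhat more computational route, but it produces a multiplicative error that the lemma has no room for. Concretely, the exact bound $z_2\leq\zeta(2)f(-x_2)<\zeta(2)\cdot 9.2\cdot 10^{-13}$ together with the hypothesis gives only $x_2/z_2>x_1/\bigl(\zeta(2)f(-x_1)\bigr)$, while your upper bound on $x_1/z_1$ is $\bigl(1+O(10^{-12})\bigr)\,x_1/\bigl(\zeta(2)f(-x_1)\bigr)$; these two do not compare, because the hypothesis $x_2>\frac{9.2\cdot 10^{-13}}{f(-x_1)}x_1$ is a strict inequality with no quantitative margin. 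What you actually prove is the statement with $9.2\cdot 10^{-13}$ replaced by $(1+\epsilon)\cdot 9.2\cdot 10^{-13}$ for some $\epsilon$ of order $\zeta(2)f(-x_1)$. You acknowledge this and wave it away as ``absorbed effortlessly,'' but that is a remark about the downstream application (where the constant $4$ in ``$e^{15}$ is better than all $x>4e^{15}$'' has ample slack), not a proof of the lemma as stated. The paper's convexity trick is precisely what avoids this loss: it never approximates the inverse of $f$ and therefore needs no slack. To make your version rigorous you would either have to weaken the constant in the statement, or import the convexity observation to turn the approximate comparison into an exact one.
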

\begin{proof}
Suppose that $x_2>x_1$, and that $x_1$ is not better than $x_2$. Let $y_1, y_2$ be given by the equations $f(-x_i)=f(y_i)$. We then have $y_2>y_1$, and since $f$ is convex in $x\geq 0$, we conclude that $\frac{f(y_2)}{y_2}>\frac{f(y_1)}{y_1}$, thus $\frac{f(-x_2)}{x_2}>\frac{f(-x_1)}{x_1}$. Our claim now follows from Lemma~\ref{Lem:absolute bound}.
\end{proof}

We now apply Lemma~\ref{Lem:explicit} with $T=100$ and neglect all roots except $\frac{1}{2}+i\gamma_1$, where $\gamma_1=14.13\ldots$ to find
\begin{eqnarray*}
f(-x) & = & \frac{1}{x^2\zeta'(-1)}+\frac{2}{\sqrt{x}}\Re\frac{x^{i\gamma_1}\Gamma(\frac{1}{2}-i\gamma_1)}{\zeta'(\rho_1)}\\
&& + \theta\left(\frac{4\cdot 10^{-14}}{x^{1/2}} + 
\frac{15.18}{x^{5/2}} + (5.15\log x + 4.48)\cdot 10^{-65}\right)\\
 & = & \frac{1}{x^2\zeta'(-1)}+\frac{10^{-14}}{\sqrt{x}}\Re \left(x^{i\gamma_1}(-14102+143259i+5\theta)\right)\nonumber\\
&& + \theta\left(\frac{15.18}{x^{5/2}} + (5.15\log x + 4.48)\cdot 10^{-65}\right)\\
 & = & \frac{1}{x^2\zeta'(-1)}+\frac{10^{-14}}{\sqrt{x}}\Re \left(x^{i\gamma_1}(-14102+143259i+21\theta)\right),
 \end{eqnarray*}
 provided that $2.5\cdot 10^6\leq x\leq 10^{50}$. Putting $s=\log(-x)$, we
 obtain
\begin{equation}
 \label{eq:special explicit}
f(-e^s) =\frac{e^{-2s}}{\zeta'(-1)} + (143951+22\theta)\cdot 10^{-14} e^{-s/2}
\cos(\gamma_1 s+1.66892)
\end{equation}
In particular we obtain $f(-e^{15})>2.3\cdot 10^{-13}$, thus, using Lemma~\ref{Lem:better}, $e^{15}$ is better than all $x$ satisfying $x>4\cdot e^{15}$. In particular we only have to consider values of $x$, for which the approximation (\ref{eq:special explicit}) is valid. 

Considering the power series for $f$ we find that for $x\in[0, 10^{20}]$ with $f(-x)>0$ the unique value $y$ with $f(y)=f(-x)$ satisfies $y<\zeta(2)f(-x)$, as well as 
\[
f(y)<\frac{y}{\zeta(2)} + e^y-1-y < \frac{y}{\zeta(2)}+y^2,
\]
thus
\[
y>\zeta(2)f(-x)-(\zeta(2)f(-x))^2>\left(1-\frac{3\cdot 10^{-9}}{\sqrt{x}}\right)\zeta(2)f(-x),
\]
and therefore $y=\left(1+\frac{3\cdot 10^{-9}\theta}{\sqrt{x}}\right)\zeta(2)f(-x)$. We conclude that in the relevant range the function to be minimized is
\[
\left(1+\frac{4\cdot 10^{-9}\theta}{\sqrt{x}}\right)\frac{x}{f(-x)},
\]
subject to the condition $f(-x)>0$. Since this condition in particular implies that the first, negative, summand in (\ref{eq:special explicit}) is of smaller absolute value than the second, we obtain that we have to minimize the inverse of
\[
\frac{e^{-3s}}{\zeta'(-1)} + (143951+24\theta)\cdot 10^{-14} e^{-3s/2}
\cos(\gamma_1 s+1.66892)
\]
subject to the condition that this expression is positive, that is, we have to find the largest local maximum of this function.

The first positive local maximum of this function occurs at $s=14.99$ with a value of $7.01\cdot 10^{-20}$, the second at $15.44$ with a value of $7.97\cdot 10^{-20}$, the third at $15.88$ with a value $5.26\cdot 10^{-20}$. All further local maxima are much smaller. The precision is sufficient to guarantee that the maximum is attained in the interval $[15.43, 15.45]$ and has a value in the interval $[7.9\cdot 10^{-20}, 8\cdot 10^{-20}]$.

We can now refine our computation by using the latter bound to improve the error in (\ref{eq:explicit}). We put $T=100$ in Lemma~\ref{Lem:explicit} and get
\begin{eqnarray*}
f(-x) & = & \frac{1}{x^2\zeta'(-1)}+\frac{2}{\sqrt{x}}\sum_{j=1}^{29}
\frac{\Gamma(\frac{1}{2}-i\gamma_j)}{\zeta'(\frac{1}{2}+i\gamma_j)} \Re x^{i\gamma_j} +
1.69\cdot 10^{-16}\theta \\
 & = & \frac{1}{x^2\zeta'(-1)}+\frac{2}{\sqrt{x}}\left|\frac{\Gamma(\frac{1}{2}-i\gamma_j)}{\zeta'(\frac{1}{2}+i\gamma_j)}\right|\cos\left(\gamma_1\log x + \arg\frac{\Gamma(\frac{1}{2}-i\gamma_j)}{\zeta'(\frac{1}{2}+i\gamma_j)}\right) +
1.75\cdot 10^{-16}\theta \\
\end{eqnarray*}
for $e^{15.43}<x<e^{15.45}$. From this we find that the maximum of $\frac{f(-x)}{x}$ is attained in $\log x = 15.4382+\theta 0.0001$ and has a value $(796947+\theta)\cdot 10^{-25
}$, and the value of $c_0$ is $(1.25479+0.00002\theta)\cdot 10^{19}$. The proof of Theorem~\ref{thm:Numerik} is complete.

\end{document}